\newtheorem{thm}{Theorem}[section]
\newtheorem{conj}[thm]{Conjecture}
\newtheorem{qst}[thm]{Question}
\theoremstyle{definition}
\theoremstyle{remark}
\newtheorem{rmk}[thm]{Remark}
\newcommand{\Z}{\mathbb Z}
\renewcommand{\phi}{\varphi}
\newcommand{\ann}{\mathop{\mathrm{Ann}}\nolimits}
\newcommand{\reg}{\mathop{\mathrm{reg}}\nolimits}
\newcommand{\pd}{\mathop{\mathrm{pd}}\nolimits}
\newcommand{\tensor}{\otimes}
\newcommand{\tor}{\mathop{\mathrm{Tor}}\nolimits}
\renewcommand{\hom}{\mathop{\mathrm{Hom}}\nolimits}
\newcommand{\lto}[1]{\overset{#1}{\longrightarrow\,}}
\newcommand{\xto}[1]{\overset{#1}{\to}}
\title[Stillman's Question and Herzog's Conjecture]{Stillman's Question for Exterior Algebras and Herzog's Conjecture on Betti Numbers of Syzygy Modules}
\author{Jason McCullough}
\email{jmccullo@iastate.edu}
\address{Iowa State University, Department of Mathematics, Ames, IA 50011}
\subjclass[2010]{Primary: 13D02 ; Secondary: 15A75}
\keywords{}
\begin{document}
\maketitle

\begin{abstract} Let $K$ be a field of characteristic $0$ and consider exterior algebras of finite dimensional $K$-vector spaces.  In this short paper we exhibit principal quadric ideals in a family whose Castelnuovo-Mumford regularity is unbounded.  This negatively answers the analogue of Stillman's Question for exterior algebras posed by I. Peeva.  We show that these examples are dual to modules over polynomial rings that yield counterexamples to a conjecture of J. Herzog on the Betti numbers in the linear strand of syzygy modules.
\end{abstract}

\section{Introduction}

Let $K$ be a field and let $S$ be the symmetric algebra of some finite dimensional $K$-vector space.  Stillman \cite[Problem 3.14]{PS} posed the following question: Can the projective dimension $\pd_S(S/I)$ of a homogeneous ideal $I$ be bounded purely in terms of the number and degrees of the minimal generators of $I$?  Caviglia showed that this question was equivalent to the parallel question where one replaces projective dimension by regularity.  (cf. \cite[Theorem 2.4]{MS})  Ananyan-Hochster \cite{AH} recently gave a positive answer to this question in full generality.  


Now let $E$ be the positively graded exterior algebra of a finite dimensional $K$-vector space.  While resolutions over $E$ need not be finite, the regularity $\reg_E(M)$ of a finitely generated $E$-module is finite since $E$ is a Koszul algebra.  Here $\reg_E(M)$ is defined as
\[\reg_E(M) \mathrel{\mathop:}= \sup\{j - i | \tor_i^E(M,K)_j \neq 0\}.\]
Irena Peeva posed the following variant of Stillman's Question at the Joint Introductory Workshop at MSRI in the fall of 2012:

\begin{qst}[Peeva]\label{peeva} Can the regularity $\reg_E(E/I)$ of a homogeneous ideal $I$ of $E$ be bounded purely in terms of the number and degrees of the generators?
\end{qst}

Surprisingly, and contrary to the symmetric algebra case, the answer to this question is no.  In Section~\ref{stillman} we present a family of principal quadric ideals in exterior algebras over an arbitrary field whose regularity is unbounded.  

Now let $M$ be a finitely generated, graded $S$-module where $S = K[x_1,\ldots,x_n]$.  Let $d$ denote the minimal degree of a generator of $M$.  We consider the Betti numbers in the linear strand of $M$, that is:
\[\beta_i^{\text{lin}}(M) \mathrel{\mathop:}= \beta_{i,d+i}(M),\]
where $\beta_{i,j}(M) = \dim_K \tor_i^S(M,K)_j$.  
The length of the linear strand of $M$ is $\max\{i\,|\,\beta_i^{\text{lin}}(M) \neq 0\}$.  Herzog proposed the following lower bound on the Betti numbers in the linear strand of $k$th syzygy modules.
\begin{conj}[{Herzog \cite{Herzog}}]\label{her}
If $M$ is a graded $k$th syzygy module over $S$ with linear strand of length $p$, then 
\[\beta_{i}^{\text{lin}}(M) \ge \binom{p+k}{i+k}.\]
\end{conj}

\noindent The conjecture has been proved in the following cases:
\begin{enumerate}
\item Herzog proved the case $k = 0$ in \cite{Herzog}.
\item Herzog was motivated by a result of Green \cite{Green} which contained the case $k = 1, i = 0$.  See also similar results by Eisenbud-Koh \cite{EK}.
\item Reiner and Welker \cite{RW} proved the case when $M$ is a monomial ideal and $k = 1$.
\item R\"omer \cite{Romer} proved the following cases:
\begin{enumerate}
\item When $k = 1$ and $0 \le i \le p$.
\item When $p > 0$ and $i = p-1$; i.e. $\beta_{p-1}^{\text{lin}}(M) \ge \binom{p+k}{p-1+k} = p + k$.
\item When $M$ is $\Z^n$-graded, R\"omer proved the conjecture in full generality.

\end{enumerate}
\end{enumerate}

By modifying a recent construction of Iyengar-Walker \cite{IyengarWalker}, we use the Bernstein-Gel'fand-Gel'fand (BGG) correspondence to produce counterexamples for virtually all other cases of Herzog's Conjecture.  More precisely, in Section~\ref{herzogcounter} we construct for each $n \ge 1$ a finitely generated, graded $S = K[x_1,\ldots,x_{2n}]$-module $M$ that is an $n$th sygyzy module such that $M$ has linear resolution of length $n$ and has graded Betti numbers
\[\beta_i^{\text{lin}}(M) = \binom{2n}{n+i} - \binom{2n}{n+i+2}, \qquad \text{for } 0 \le i \le n.\]
We note that for $0 \le i \le n-2$ that
\[\beta_i^{\text{lin}}(M) = \binom{2n}{n+i} - \binom{2n}{n+i+2} <  \binom{n+n}{n+i},\]
contradicting the conjecture.  These examples are BGG dual to the principal quadric $E$-ideals mentioned above.

We note that a different construction of Conca-Herbig-Iyengar \cite{ConcaHerbigIyengar} also gives a family of counterexamples to Conjecture~\ref{her}, although that was not their aim.  Specifically, they show \cite[Theorem 5.1]{ConcaHerbigIyengar} that if $S = K[x_1,\ldots,x_n,y_1,\ldots,y_n]$ and $I$ is the ideal
\[I = \left( \{x_iy_j, x_iy_j - x_jy_i\,|\, 1 \le i,j \le n \text{ with } i \neq j\}\right),\]
then the cyclic module $B = S/I$ has graded Betti numbers
\[\beta_i^S(B) = \begin{cases}
1 & \text{ for } i,j = 0\\
\binom{2n}{i+1} - \binom{2n}{i -1} - 2 \binom{n}{i+1} & \text{ for } 1 \le i \le n-1 \text{ and } j = i + 1\\
\binom{2n}{i} - \binom{2n}{i + 2} & \text{ for } n \le i \le 2n \text{ and } j = i + 2\\
0 & \text{otherwise}
\end{cases}
\]
In particular, setting $N = \mathrm{Syz}_n(B)$ gives an $n$th syzygy module with the same graded Betti numbers (after a degree shift) as $M$ above.

\section{The BGG Correspondence}

We briefly recall the Bernstein-Gel'fand-Gel'fand (BGG) correspondence.  We refer the reader to \cite{Eisenbud} or \cite{EFS} for more details.  Let $K$ be a field of characteristic $0$.  Fix a positive integer $n$.  Let $S = K[x_1,\ldots,x_{2n}]$ denote a standard graded polynomial ring over $K$.  Set $W = S_1$ and let $V = \hom_K(W,K)$ denote the vector-space dual of $W$.  Let $E = \wedge V$ be the exterior algebra of $V$.  Let $e_1,\ldots,e_{2n}$ be a dual basis to $x_1,\ldots,x_{2n}$.  Thus $E = K\langle e_1,\ldots,e_{2n}\rangle$, which we also view as a positively graded ring with $\deg(e_i) = 1$ to keep the notation in this paper consistent.  (In \cite{Eisenbud} and \cite{EFS}, $E$ is viewed as a negatively graded ring.)

Let $\mathbf{L}$ denote the functor from the category of graded $E$-modules to the category of linear free complexes of $S$-modules defined as follows: Given any graded $E$-module $P$, we define $\mathbf{L}(P)$ to be $S \tensor_K P$ viewed as a complex of graded free $S$-modules, where $\mathbf{L}(P)_i = S \tensor_K P_{-i}$ and the differential is induced by
\[
\begin{tabular}{ccccc}
$\mathbf{L}(P):  \cdots \to$ &$S \tensor_K P_{-i} $&$\xto{d_i}$ &$S \tensor_K P_{1-i} $&$\to \cdots$ \\
& $s \tensor p$ &$\mapsto$ &$\sum sx_j \tensor e_jp$ &
\end{tabular}
\]
We extend $\mathbf{L}$ to a functor on complexes of $E$-modules by taking the total complex of the resulting double complex.  There is an adjoint function $\mathbf{R}$ from complexes of $S$-modules to complexes of $E$-modules, defined in a similar way, which creates an equivalence of bounded derived categories of graded $S$-modules and that of graded $E$-modules.  An important property of the equivalence for the purpose of this paper is that under this equivalence the functor $\mathbf{L}$ identifies finitely generated $E$-modules with linear free $S$-complexes.  

\section{Counterexamples to Herzog's Conjecture}\label{herzogcounter}

First we recall a well-known fact about exterior algebras:

\begin{thm}[{cf. \cite[Theorem 5.2]{Snellman}}]\label{generic} Let $f \in E_2 = \wedge^2 V$ be a generic element.  Then the linear transformation
\[\wedge^r V \xto{f\cdot} \wedge^{r+2} V\]
is injective for $r \le n-1$ and is surjective for $r \ge n-1$.
\end{thm}

As remarked in \cite{IyengarWalker}, it suffices to pick $f = e_1e_2 + \cdots + e_{2n-1}e_{2n}$.  We now come to the first main result.  By convention, we set $\binom{m}{k} = 0$ if $k < 0$ or if $k > m$.

\begin{thm}\label{main}
Fix $n > 0$ and a field $K$ of characteristic $0$.  Let $S = K[x_1,\ldots,x_{2n}]$ be a standard graded polynomial ring.  There exists a finitely generated, graded $S$-module $M$, generated by degree $0$ elements such that $\reg(M) = 1$ and the graded Betti numbers are 
\[
\beta_{i,j}(M) = \begin{cases}     \binom{2n}{i+1}- \binom{2n}{i-1}& \text{ if } i = j \text{ and } 0 \le i \le n\\
\binom{2n}{i}-\binom{2n}{i+2}& \text{ if } j = i+1 \text{ and } n+1 \le i \le 2n\\
0 & \text{otherwise.}
\end{cases}
\]
In particular, if we set $N = \mathrm{Syz}_n(M)$, then $\beta_{i}^{lin}(N) = \binom{2n}{n+i} - \binom{2n}{n+i+2}$, contradicting Conjecture~\ref{her} for $0 \le i \le n-2$.
\end{thm}

\begin{proof} Let $f \in E_2$ be a general element.  Consider the graded complex of free $E$-modules
\[\mathbf{G_\bullet}: E(2-2n) \lto{f} E(-2n).\]
Let $\mathbf{F_\bullet} = \mathbf{L}(\mathbf{G}_\bullet)$.  Since $\mathbf{L}(E)$ is the usual Koszul complex $K_\bullet(\mathbf{x},S)[2n]$ with a shift in homological degree, we have $\mathbf{F_\bullet} = \mathrm{cone}(K_\bullet(\mathbf{x},S)[2] \to K_\bullet(\mathbf{x},S))$.  

The complex $\mathbf{F}_\bullet$ fits in a short exact sequence
\[0 \to K_\bullet(\mathbf{x},S)[1] \to \mathbf{F_\bullet} \to K_\bullet(\mathbf{x},S) \to 0.\]
Since $H_0(K_\bullet(\mathbf{x},S)) \cong K$ and $H_i(K_\bullet(\mathbf{x},S)) = 0$ for all $i > 0$, if follows that
\[H_i(\mathbf{F}_\bullet) = \begin{cases} K & \text{ for } i = 0, -1\\
0 & \text{ otherwise.}\\
\end{cases}
\]

By Theorem~\ref{generic}, every possible cancelation in $\mathbf{F_\bullet}$ that can occur does occur.  Thus the minimal subcomplex $\mathbf{F_\bullet}'$ of $\mathbf{F_\bullet}$ has graded Betti table

\[
\begin{tabular}{r|cccccccccccc}
       &-1&0&1&$\cdots$&n-1&n&$\cdots$&2n-1&2n\\
       \hline
       \text{0:}&1&2n&$\binom{2n}{2} - 1$&$\cdots$&$\binom{2n}{n} - \binom{2n}{n-2}$&\text{-}&\text{-}&\text{-}&\text{-}\\\text{1:}&\text{-}&\text{-}&\text{-}&\text{-}&\text{-}&$\binom{2n}{n} - \binom{2n}{n+2}$&$\cdots$&2n&1\\\end{tabular}
\]
\end{proof}
Note that under the BGG correspondence, the first linear strand corresponds to $\mathbf{L}(Ann_E(f))$ and the second linear strand is $\mathbf{L}(E/(f))$.  In particular, Since $H_i(\mathbf{F}_\bullet') = 0$ for $i > 0$, if we set $\mathbf{F}_{\ge 0}'$ to be the truncation of $\mathbf{F}_\bullet'$, then  $\mathbf{F}_{\ge 0}'$ is the minimal free resolution of an $S$-module $M$.  (We shift graded degrees to ensure that $M$ is generated in degree $0$.)  In particular, $M$ now has graded Betti table of the form
\[
\begin{tabular}{r|cccccccccccc}
       &0&1&$\cdots$&n-1&n&$\cdots$&2n-1&2n\\
       \hline
       \text{0:}&2n&$\binom{2n}{2}-1$&$\cdots$&$\binom{2n}{n} - \binom{2n}{n-2}$&\text{-}&\text{-}&\text{-}&\text{-}\\\text{1:}&&\text{-}&\text{-}&\text{-}&$\binom{2n}{n} - \binom{2n}{n+2}$&$\cdots$&2n&1\\\end{tabular}
\]
Setting $N = \mathrm{Syz}_n(M)$, we see that $N$ is an $n$th syzygy with linear free resolution and graded Betti numbers as prescribed above.

\section{Peeva-Stillman Question for Exterior Algebras}\label{stillman}

Now let $E$ be the exterior algebra of a finite dimensional $K$-vector space.  Here we set the exterior variables to have degree $+1$ rather than $-1$, as in the previous section.  While resolutions over $E$ need not be finite, since $E$ is a Koszul algebra, the regularity $\reg_E(M)$ of a finitely generated $E$-module is finite.  Here $\reg_E(M)$ is defined as
\[\reg_E(M) := \sup\{j - i | \tor_i^E(M,K)_j \neq 0\}.\]
Irena Peeva posed the following variant of Stillman's Question at the Joint Introductory Workshop at MSRI in the Fall of 2012:

The following theorem gives the promised negative answer to Question~\ref{peeva}.


\begin{thm}\label{extmain} Let $K$ be a field of characteristic $0$.  Fix any positive integer $n$.  Let $V$ be the $K$-vector space with basis $e_1,e_2,\ldots,e_{2n}$ and consider the exterior algebra $E = \bigwedge V$.  Set $f = e_1e_2 + e_3e_4 + \cdots + e_{2n-1}e_{2n}$.  Then $\reg_E(E/(f)) = n$.
\end{thm}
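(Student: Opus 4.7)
The plan is to exhibit a nonzero class in $\tor_2^E(E/(f), K)_{n+2}$; by the definition of regularity, this forces $\reg_E(E/(f)) \geq (n+2)-2 = n$. Since $E/(f)$ is cyclic on a single generator in degree $0$, its minimal free resolution starts with $F_0 = E$, $F_1 = E(-2)$, and the map $F_1 \to F_0$ is multiplication by $f$; the kernel of this map is $(0:_E f)(-2)$. Thus $\beta_{2,n+2}^E(E/(f))$ equals the number of minimal generators of $(0:_E f)$ in internal degree $n$, and it suffices to exhibit one such generator.

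The candidate is $g := x_1 x_2 \cdots x_n$. Since $fg = \sum_i (x_i y_i)(x_1 \cdots x_n)$ vanishes term by term (each summand contains $x_i$ twice), $g$ lies in $(0:_E f)_n$. The substantive claim is that $g$ is a minimal generator, i.e.\ that $g \notin \m \cdot (0:_E f)$.

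To prove this I will use an auxiliary $\Z$-grading on $E$ defined by $\deg_y(x_i) = 0$ and $\deg_y(y_i) = 1$. Because $f$ is $y$-homogeneous of degree $1$, both $(0:_E f)$ and $\m \cdot (0:_E f)$ decompose as direct sums of their $y$-homogeneous components. The key step is the identification
\[
(0:_E f)^{(0)} \;=\; K \cdot g,
\]
proved by writing an arbitrary element $h = \sum_I c_I x_I$ of $\bigwedge(x_1, \ldots, x_n)$ annihilating $f$, expanding $fh = \sum_{i,\,I\, :\, i \notin I} \pm c_I \, x_i x_I y_i$, and observing that the monomials appearing are linearly independent in $E$; this forces $c_I = 0$ whenever some $i \notin I$, leaving only $I = \{1, \ldots, n\}$. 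Granting this, the $y$-degree-$0$ part of $\m \cdot (0:_E f)$ lies in $(x_1, \ldots, x_n) \cdot K g$, which is zero since $x_i g = 0$; hence $g$ survives as a nonzero class in $(0:_E f)/\m(0:_E f)$.

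I expect the main obstacle to be precisely the identification $(0:_E f)^{(0)} = K g$---a short combinatorial independence argument for monomials; the rest is formal manipulation of the first two steps of the minimal free resolution of $E/(f)$. The proof should be characteristic-free.
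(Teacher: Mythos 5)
Your proof is correct and takes essentially the same route as the paper's: both exhibit $x_1 \cdots x_n$ as a degree-$n$ minimal generator of $(0:_E f)$, and the key computation---expanding the product with $f$ and observing that the surviving monomials $x_{I\cup\{i\}}\,y_i$ are linearly independent---is identical. The only difference is organizational: the paper argues directly that no annihilator of degree less than $n$ has a nonzero term supported on the $x$-variables alone, whereas you package the same fact via the auxiliary $y$-grading, which makes the concluding step $g \notin \m\cdot(0:_E f)$ a bit more explicit than the paper's brief ``from which it follows'' remark.
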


\begin{proof}
Consider the minimal free resolution of $E/(f)$ over $E$.  Note that we may identify $\mathrm{Syz}_2(E/(f))$ with $\ann_E(f)$.  By Theorem~\ref{generic}, $\ann_E(f)$ has no elements of degree $< n$.  Thus $\reg_E(E/(f)) \ge n$.  On the other hand, by the proof of Theorem~\ref{main}, we see that $\mathbf{L}(E/(f))$ has a linear free $S$-resolution.  By the Dictionary Theorem \cite[Theorem 7.7]{Eisenbud}, $\hom_K(E/(f), K) \cong \ann_E(f)$ has a linear free $E$-resolution.  Therefore $\reg_E(E/(f)) = n$.
\end{proof}

\begin{rmk} The previous theorem also holds over any field of positive characteristic $p > 0$, but it is no longer true that $\ann(f)$ has a linear free resolution since in particular $f^p = 0$.
\end{rmk}

\section{An Example}

Finally we show how one can construct $n$th syzygy modules with linear free resolutions of length $n$ and with graded Betti numbers smaller than those predicted by Herzog's Conjecture in Macaulay2.  Here we make use of the BGG package written by Abo, Decker, Eisenbud, Schreyer, Smith and Stillman.  Here we demonstrate the case $n = 4$.

\begin{verbatim}
i1 : n = 4;

i2 : E = QQ[e_1..e_(2*n),SkewCommutative=>true]; 
-- an exterior algebra in 2n variables

i3 : S = QQ[x_1..x_(2*n)];                       
-- a polynomial ring in 2n variables

i4 : I = ideal sum for i from 1 to n list e_(2*i-1)*e_(2*i)

o4 = ideal(e e  + e e  + e e  + e e )
            1 2    3 4    5 6    7 8

o4 : Ideal of E

i5 : betti res I   
-- The resolution over E of E/I. reg(E/I) = n

            0 1  2   3    4    5    6     7     8     9
o5 = total: 1 1 42 288 1155 3520 9009 20384 42042 80640
         0: 1 .  .   .    .    .    .     .     .     .
         1: . 1  .   .    .    .    .     .     .     .
         2: . .  .   .    .    .    .     .     .     .
         3: . .  .   .    .    .    .     .     .     .
         4: . . 42 288 1155 3520 9009 20384 42042 80640

o5 : BettiTally

i6 : J = ann I;

o6 : Ideal of E

i7 : P = J / ideal product flatten entries vars E;

i8 : loadPackage "BGG"

o8 = BGG

o8 : Package

i9 : F = bgg(2*n-2,P,S);

             8       27
o9 : Matrix S  <--- S

i10 : M = coker F  ** S^{-(2*n-1)};  
-- M is an S-module

i11 : betti res M   
-- the linear strand of the resolution of M is L(J/(e_1*...*e_2n))

             0  1  2  3  4  5  6 7 8
o11 = total: 8 27 48 42 42 48 27 8 1
          0: 8 27 48 42  .  .  . . .
          1: .  .  .  . 42 48 27 8 1

o11 : BettiTally

i12 : G = bgg(n-1,comodule I,S);

              42       48
o12 : Matrix S   <--- S

i13 : N = coker G  ** S^{-n};  
-- N is the nth syzygy module of M, with a shift in grading

i14 : betti res N    
-- its resolution is L(E/I) under the BGG correspondence

              0  1  2 3 4
o14 = total: 42 48 27 8 1
          0: 42 48 27 8 1

o14 : BettiTally\end{verbatim}

It is clear that this is not the most extreme counterexample to Herzog's Conjecture.  Note that the module $N$ above has rank $14$ and is a $4$th syzygy module over $S$.  By a Theorem of Bruns (cf. \cite[Corollary 3.13]{EG}), there exists a free submodule $F \subset N$ of rank $10$ such $N/F$ is a still a $4$th syzygy module.  The module $N/F$ would then have Betti table
\[
\begin{tabular}{r|ccccc}
       &0&1&2&3&4\\
       \hline
       \text{0:}&32&48&27&8&1\\\end{tabular}
\]
It would be interesting to know in general what the minimal Betti numbers are for a $n$th syzygy module with linear free resolution.

\section*{Acknowledgements} 

The author thanks Srikanth Iyengar, Irena Peeva, and Mark Walker for useful conversations concerning this paper and also Jerzy Weyman for pointing out reference \cite{Snellman}.

\bibliographystyle{amsplain}
\bibliography{bib}

\end{document}